\documentclass[titlepage]{amsart}
\usepackage{amsaddr}
\usepackage{amsthm,amsmath,amssymb,eucal}
\usepackage{geometry}
\usepackage{resizegather}
\usepackage{graphicx}
\usepackage{color}
\usepackage[normalem]{ulem}
\usepackage[latin1]{inputenc}
\usepackage{enumerate}

\newcommand{\ee}{\mathrm{e}}
\newcommand{\D}{\mathrm{d}}
\newcommand{\C}{\mathbb{C}}

\newcommand{\R}{\mathbb{R}}
\newcommand{\CC}{\mathcal{C}}
\newcommand{\DD}{\mathcal{D}}
\newcommand{\OO}{\mathcal{O}}
\newcommand{\RR}{\mathcal{R}}
\newcommand{\VV}{\mathcal{V}}

\newtheorem{claim}{Claim}[section]
\newtheorem{theorem}[claim]{Theorem}
\newtheorem{proposition}[claim]{Proposition}
\newtheorem{lemma}[claim]{Lemma}
\newtheorem{corollary}[claim]{Corollary}

\title[Soft quantum waveguides in three dimensions]{Soft quantum waveguides in three dimensions}

\author{Pavel Exner}

\address{{Doppler Institute for Mathematical Physics and Applied Mathematics, Czech Technical University,
B\v rehov{\'a} 7, 11519 Prague, Czechia} {\rm and} {Department of Theoretical Physics, Nuclear Physics Institute, Czech Academy of Sciences, 25068 \v{R}e\v{z} near Prague, Czechia}
}

\email{exner@ujf.cas.cz}

\date{\today}

\begin{document}

\begin{abstract}
We discuss a three-dimensional soft quantum waveguide, in other words, Schr\"odinger operator in $\R^3$ with an attractive potential supported by an infinite tube and keeping its transverse profile fixed. We show that if the tube is asymptotically straight, the distance between its ends is unbounded, and its twist satisfies the so-called Tang condition, the esential spectrum is not affected by smooth bends. Furthermore, we derive a sufficient condition, expressed in terms of the tube geometry, for the discrete spectrum of such an operator to be nonempty.
\end{abstract}

\maketitle

\section{Introduction} 
\setcounter{equation}{0}

Quantum mechanics of particles confined to regions of a tubular shape represents an interesting problem, both mathematically and from the practical point of view. On the one hand, there is an intriguing connection between spectral and transport properties of such systems and their geometry; for a survey we refer to \cite{EK15} and the bibliography therein. On the other hand, we have here a natural model to describe numerous physical systems investigated experimentally such as semiconductor nanowires, cold atom waveguides, and many others, see e.g. \cite{LCM99}.

Two frequently considered models of such a dynamics are hard-wall tubes, where the Hamiltonian is the Dirichlet Laplacian, and  `leaky wires', that is, singular Schr\"odinger operators formally written as $-\Delta-\alpha\delta(x-\Gamma)$, where $\Gamma$ is curve. As tools to describe real physical systems, both include idealizations. In the first case the tube boundary does now allow tunneling between different parts of such a guide. On the other hand, working with leaky wires we neglect the width of the guide. A natural way to overcome these deficiencies is to consider Schr\"odinger operators with a regular attractive potential in the form of a `ditch' of a fixed profile; a natural name \emph{soft quantum waveguides} for such systems was proposed recently \cite{Ex20}.

In the mentioned paper it was shown that two-dimensional soft waveguides share an important property with their more idealized counterparts, namely that a bend of the potential channel can give rise to bound states of such a Hamiltonian, although the existence condition derived in \cite{Ex20} lacks the universality known in the said two models. Other recent results concern the existence of a discrete spectrum in the example of a `bookcover' guide \cite{KKK21} and a spectral optimization for ring-shaped soft guides \cite{EL21}; a related two-body results can be found in \cite{EKP20}.

One has to add that `soft' constraints localizing particle motion to the vicinity of a more general Riemannian manifold have been considered before in different contexts; as a prime example one can mention the treatise \cite{WT14} and the literature there. In most of this work, however, the focus is rather on the behavior of such systems in the situation when such a constraint is `tightening', giving rise to an effective dynamics on the manifold; this approach roots in a decades-old physics idea how the quantization on manifolds can be dealt with \cite{KJ71, To88}. Particular cases of the zero-width of potential channels were also considered in the the theory of `leaky' quantum guides \cite{EI01, EK03}.

Returning to \cite{Ex20}, the discussion there provided a list of open questions. One of them concerned a three-dimensional counterpart of the soft waveguides studied there, which is the topic of the present short paper. The extension is nontrivial; the reason is that the geometry of the problem is more complicated in three dimensions. Except the case when the waveguide profile has a rotational symmetry in the normal planes, one has to impose a restriction on the guide twist in order to get the effective attractive interaction analogous to that in the two-dimensional situation, otherwise one also has a competing effect of an effective repulsion coming from the torsion. Such a general case is considerably more complicated and we will not address it here.

Our aim is to derive a condition which ensures the existence of a discrete spectrum in a potential channel which is bent, but asymptotically straight, and twisted, respecting the mentioned restriction. The tool we use is the Birman-Schwinger technique. Let us describe briefly the contents of the paper. In the next section we state the problem in proper terms and list the assumptions we make. After that, in Sec.~3, we will show that as long as the channel remains asymptotically straight and the distance between the tube ends is unbounded, the bending does not affect its essential spectrum. In Sec.~4 we first show that a bent channel has bound states if it is deep enough, then we recall the Birman-Schwinger principle and work it out for our purposes. The main result, a sufficient condition of a quantitative nature for the existence of a discrete spectrum, is stated and proved in Sec.~5. Finally, we add a few concluding remarks.

\section{The model} 
\setcounter{equation}{0}

Let $\Gamma$ be an infinite curve in $\R^3$ free of self-intersections, in other words, graph of a function $\Gamma:\:\R\to\R^3$; with an abuse of notation we employ the same symbol for both. We suppose that $\Gamma$ is parametrized by its arc length, $|\dot\Gamma(s)|=1$, and furthermore,
 \begin{enumerate}[(a)]
 \setlength{\itemsep}{0pt}
\item $\Gamma$ is $C^2$-smooth. \label{assa}
 \end{enumerate}
This allows us to define a global system of curvilinear coordinates in the vicinity of $\Gamma$. As long as $\ddot\Gamma(s)\ne 0$ the curve has the unique Frenet triad frame $(t,n,b)$. This is no longer true if $\ddot\Gamma$ vanishes, either at a point or because $\Gamma$ has a straight segment.
As it will become clear soon, however, our considerations involve rotations in the plane perpendicular to $\Gamma$ and the results do not change when the angle is shifted independently of $s$. Hence one can glue smoothly rotated local triads, and without loss of generality we may reason as if the Frenet frame existed globally. The three unit vectors satisfy, as functions of $s$, the Frenet formula
 $$ 
\left(\begin{array}{ccc} \dot t \\ \dot n \\ \dot b \end{array}\right)
= \begin{pmatrix} 0 & \gamma & 0 \\ -\gamma & 0 & \tau \\ 0 & -\tau & 0 \end{pmatrix}
\left(\begin{array}{ccc} t \\ n \\ b \end{array}\right),
 $$ 
where $\gamma,\,\tau$ are the curvature and torsion of $\Gamma$, respectively; recall that the knowledge of these functions allows us to reconstruct the curve uniquely up to Euclidean transformations. In addition, we suppose that the curve is such that
 \begin{enumerate}[(a)]
 \setcounter{enumi}{1}
 \setlength{\itemsep}{1pt}
\item $\gamma$ and $\tau$ are either of compact support, vanishing outside $[-s_0,s_0]$ for some $s_0>0$, or $\Gamma$ is $C^4$-smooth, $\tau,\,\dot\tau$ are bounded, and the curvature $\gamma$ together with its first and second derivatives tend to zero as $|s|\to\infty$, \label{assb}
\item $|\Gamma(s)-\Gamma(s')|\to \infty$ holds as $|s-s'|\to\infty$ (excluding, in particular, U-shaped curves). \label{assc}
 \end{enumerate}

The second ingredient is the support of the potential cross section. We suppose that it is an open precompact set $M\subset\R^2$ containing the origin of the coordinates and set $a:=\sup_{x\in M} |x|$. Without loss of generality we may suppose that $M$ is simply connected as the reader will see from assumption \eqref{assf} below. Given a function $\alpha:\R\to\R$, we define then the tubular region
 $$ 
 \Omega^\alpha_{\Gamma,M}:= x(\R\times M),\quad x(s,r,\theta):= \Gamma(s) - r\big[ n(s)\cos(\theta-\alpha) + b(s)\sin(\theta-\alpha) \big],
 $$ 
in particular, $\Omega_{\Gamma_0,M} := \R\times M$ corresponding to $x=\mathrm{id}$ is the tube built along a straight line. In the following we often drop the superscript and subscript if they are clear from the context. Furthermore, we assume that
 \begin{enumerate}[(a)]
 \setcounter{enumi}{3}
 \setlength{\itemsep}{0pt}
\item the map $x$ is injective, \label{assd}
\item $\dot\alpha=\tau$. \label{asse}
 \end{enumerate}
The first condition says, in other words, that the tube does not intersect itself. Locally the injectivity is guaranteed by the inequality $a\|\gamma\|_\infty<1$, however, the global injectivity requirement is stronger. Assumption~\eqref{assd} ensures the existence of a locally orthogonal system of coordinates in which points of $\Omega$ are uniquely parametrized by the arc length $s$, the distance $r$ from $\Gamma$, and the angle $\theta$.

Assumption~\eqref{asse}, on the other hand, usually referred to as \emph{Tang condition}, specifies a particular subset of such parametrizations in which the torsion does not couple the transverse and longitudinal coordinates, cf.~\eqref{tube metric t} below. It is a nontrivial restriction to the class of admissible tubes unless the cross section $M$ is a disc centered at the origin and, of course, the potential introduced below has the same symmetry. Spectral effects occurring in the absence of such a symmetry require a different technique and will be considered elsewhere.

As indicated in the introduction, we are interested in Schr\"odinger operators with an attractive potential supported in $\Omega_{\Gamma,M}$. Its profile is supposed to be
 \begin{enumerate}[(a)]
 \setcounter{enumi}{5}
 \setlength{\itemsep}{0pt}
 \item a nonzero $V\ge 0$ from $L^\infty(\R^2)$ with $\mathrm{supp}\,V\subset M$. \label{assf}
 \end{enumerate}
The support precompactness is vital for our reasoning, as well as the fact that the potential is nontrivial. The rest of the assumption, on the other hand, we make for simplicity of presentation. Most of our conclusions will remain valid without the positivity -- used only in Corollary~\ref{prop:smallwidth} -- and boundedness -- used in Proposition~\ref{prop:BS} -- as long as the operator \eqref{transop} below has a negative eigenvalue. Using the polar coordinates in the plane perpendicular to $\Gamma$, we set
 \begin{subequations}
 \begin{align}
 \tilde{V}:\: \Omega_{\Gamma,M} & \to\R_+,\quad \tilde{V}(x(s,r,\theta))=V(r,\theta), \label{potential} \\[.5em]
 H_{\Gamma,V} & = -\Delta-\tilde{V}(x)\,; \label{Hamiltonian}
 \end{align}
 \end{subequations}
in view of assumption \eqref{assf} the operator domain is $D(-\Delta)=H^2(\R^3)$. It is also useful to introduce the comparison operator on $L^2(\R^2)$,
 \begin{equation} \label{transop}
h_V = -\Delta-V(x)
 \end{equation}
with the domain $H^2(\R^2)$ which has by assumption \eqref{assf} a nonempty and finite discrete spectrum with
 \begin{equation} \label{infsph}
\epsilon_0 := \inf\sigma_\mathrm{disc}(h_V) = \inf\sigma(h_V)\in \big(-\|V\|_\infty,0\big),
 \end{equation}
where $\epsilon_0$ is a simple eigenvalue and the associated eigenfunction $\phi_0\in H^2(\R)$ can be chosen strictly positive; with an abuse of notation we will employ the same symbol for this function and its restriction to the set $M$.

We are interested in the spectrum of $H_{\Gamma,V}$. The relation \eqref{infsph} helps us to find it in the situation when the generating curve is a straight line because in that case the variables separate and we have
 \begin{equation} \label{straightsp}
\sigma(H_{\Gamma_0,V}) = \sigma_\mathrm{ess}(H_{\Gamma_0,V}) = [\epsilon_0,\infty).
 \end{equation}
We want to know what happens if $\Gamma$ satisfies assumptions~\eqref{assa}--\eqref{asse} being \emph{not straight} but at the same time \emph{asymptotically straight} in the sense of assumption~\eqref{assb}.

\section{The essential spectrum}\label{s:essential}
\setcounter{equation}{0}

Let us first show that under the hypotheses made about the geometry of the potential support the essential spectrum given by \eqref{straightsp} is preserved.

\begin{proposition} \label{prop:essential}
Under assumptions \eqref{assa}--\eqref{assf} we have $\sigma_\mathrm{ess}(H_{\Gamma,V}) = [\epsilon_0,\infty)$.
\end{proposition}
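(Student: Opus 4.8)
The plan is to split the proof into the two standard inclusions. The inclusion $[\epsilon_0,\infty)\subset\sigma_\mathrm{ess}(H_{\Gamma,V})$ will be obtained by a Weyl-sequence argument. For a fixed $\lambda\ge\epsilon_0$ write $\lambda=\epsilon_0+p^2$ with $p\ge 0$, and recall from \eqref{straightsp} that the straight tube $H_{\Gamma_0,V}$ has a generalized eigenfunction of the product form $\phi_0(r,\theta)\,\ee^{ips}$. One localizes it in the longitudinal variable by a cutoff $\chi_k(s)=\chi((s-s_k)/k)$, whose support is pushed off to infinity along $\Gamma$ by choosing $s_k\to\infty$; because of assumption \eqref{assb} the curve is straight outside $[-s_0,s_0]$ (resp.\ the curvature and its derivatives tend to zero) on the support of $\chi_k$ for $k$ large, so in the curvilinear coordinates $(s,r,\theta)$ the operator $H_{\Gamma,V}$ acts there like $H_{\Gamma_0,V}$ up to error terms governed by $\gamma,\dot\gamma,\ddot\gamma$ which are uniformly small on the relevant $s$-range. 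A direct estimate then shows that $\psi_k:=\chi_k\,\phi_0\,\ee^{ips}$ (transplanted to $\R^3$ via the map $x$, and normalized) is a singular Weyl sequence for $H_{\Gamma,V}$ at $\lambda$; orthogonality of distinct $\psi_k$ is arranged by taking the supports disjoint.

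The reverse inclusion $\sigma_\mathrm{ess}(H_{\Gamma,V})\subset[\epsilon_0,\infty)$ is the substantive part and uses a Neumann bracketing / decomposition-at-infinity argument. First pass to the curvilinear coordinates $(s,r,\theta)$ on the tubular neighbourhood; by the Tang condition \eqref{asse} the metric \eqref{tube metric t} is block-diagonal in the $s$ versus $(r,\theta)$ split, with Jacobian factor $(1-r\gamma\cos(\theta-\alpha))$, so that $H_{\Gamma,V}$ is unitarily equivalent to an operator on $L^2(\R\times M,\,(1-r\gamma\cos(\theta-\alpha))\,\D s\,\D r\,r\,\D\theta)$ whose transverse part is a perturbation of $h_V$ and whose longitudinal part is $-\partial_s^2$ up to curvature-dependent corrections. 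Outside a large ball, i.e.\ for $|s|\ge R$ with $R$ large, assumption \eqref{assb} makes all these corrections uniformly small, so by imposing Neumann conditions on the spheres $|x|=R'$ (for a suitable $R'$) one bounds $H_{\Gamma,V}$ from below by $H^{\mathrm{int}}_{R}\oplus H^{\mathrm{ext}}_{R}$, where $H^{\mathrm{int}}_{R}$ has compact resolvent (bounded region, $-\Delta$ minus a bounded potential) and contributes nothing to the essential spectrum, while $H^{\mathrm{ext}}_{R}$ is, up to an error $o(1)$ as $R\to\infty$, a direct sum over the two asymptotic half-tubes of copies of (Neumann realizations of) $-\partial_s^2\otimes I + I\otimes h_V$, whose spectrum is $[\epsilon_0,\infty)$. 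Letting $R\to\infty$ and invoking the stability of the essential spectrum under the vanishing perturbation gives $\inf\sigma_\mathrm{ess}(H_{\Gamma,V})\ge\epsilon_0$, and since we have already shown the opposite bound there is nothing above $\epsilon_0$ missing either; combined with the first inclusion this yields $\sigma_\mathrm{ess}(H_{\Gamma,V})=[\epsilon_0,\infty)$.

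The role of assumption \eqref{assc} deserves a separate word: it guarantees that the map $x$ remains injective at infinity in a robust way and, more importantly, that the two ends of the tube do not come close to each other, so that the Neumann-decoupled exterior genuinely splits into two noninteracting half-tubes rather than a region where the potential from one end overlaps the other; without it one could in principle create additional essential spectrum from a ``pinched'' U-shaped configuration. I expect the main obstacle to be the bookkeeping in the exterior estimate: one must control the curvilinear Laplacian's deviation from the straight model in operator-form sense, uniformly for $|s|\ge R$, using precisely the decay hypotheses on $\gamma,\dot\gamma,\ddot\gamma$ in \eqref{assb} (the $C^4$/bounded-$\tau,\dot\tau$ clauses enter here to make the transformed operator's coefficients well behaved), and then quantify the smallness of the Neumann-bracketing gap as $R\to\infty$. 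Once that quantitative control is in place, the conclusion follows from standard perturbation theory for the essential spectrum.
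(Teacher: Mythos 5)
Your overall strategy coincides with the paper's: Neumann bracketing for $\inf\sigma_\mathrm{ess}(H_{\Gamma,V})\ge\epsilon_0$ and a singular Weyl sequence built from $\phi_0\,\ee^{ips}$ to fill out $[\epsilon_0,\infty)$. There are, however, two places where your write-up skips the step that carries the actual content.

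The main gap is in the lower bound. To separate variables in the exterior you must work in regions of product form in the curvilinear coordinates, i.e.\ half-tubes $x(I_\pm\times\mathcal{B}_\rho(0))$ with a \emph{bounded} transverse cross-section (boundedness of $\rho$ is forced by the validity of the coordinates, $\rho\|\gamma\|_\infty<1$, and by global injectivity); your decomposition along spheres $|x|=R'$ does not produce such regions, and the exterior of a ball is in any case not "a direct sum over the two half-tubes" -- one needs a further Neumann cut separating the half-tubes from the potential-free remainder, where the operator is simply nonnegative. More importantly, once you bracket on a half-tube of radius $\rho$, the transverse operator is the Neumann restriction $h^\mathrm{N}_{V,\rho}$ of $h_V$ to $\mathcal{B}_\rho(0)$, whose ground-state energy $\epsilon(\rho)$ lies \emph{below} $\epsilon_0$ in general: Neumann bracketing pushes the spectrum down, so the error is not a harmless $o(1)$ that "stability of the essential spectrum" absorbs -- it points in the wrong direction. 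The crux of the paper's argument is precisely that $\epsilon(\rho)\to\epsilon_0$ as $\rho\to\infty$ (by a Dauge--Helffer type argument, in fact exponentially fast) and that assumption \eqref{assc} permits choosing $\rho$ and the longitudinal cut-off simultaneously large, so that both the transverse Neumann error and the curvature corrections $1\pm\rho\|\gamma_\pm\|_\infty$ in the form \eqref{tube form H} can be made arbitrarily close to their straight-tube values. Without this convergence statement the bound you obtain is only $\inf\sigma_\mathrm{ess}\ge\epsilon(\rho)$ for a fixed $\rho$, which is weaker than \eqref{infspess}.

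A second, smaller issue: the trial functions $\chi_k\,\phi_0\,\ee^{ips}$ are not well defined on $\R^3$ as written, because the map $x$ is only injective (and the curvilinear coordinates only meaningful) on a tubular neighbourhood of finite radius, while $\phi_0$ lives on all of $\R^2$. You need a transverse cutoff as in \eqref{weylseq}, and then an argument -- resting on \eqref{assb} and \eqref{assc} -- that one can find mutually disjoint regions threaded by the curve that grow in \emph{both} the longitudinal and the radial direction. The exponential decay of $\phi_0$ makes this a routine repair, but it is a step that must be made.
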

\begin{proof}
To begin with, we check that the lower bound of the essential spectrum cannot decrease, in other words, that
 \begin{equation} \label{infspess}
\inf\sigma_\mathrm{ess}(H_{\Gamma,V}) = \epsilon_0.
 \end{equation}
This can be done by Neumann bracketing; the argument is easier under the first part of assumption~\eqref{assb}. In that case we split $\R^3$ into four parts and impose Neumann condition at their boundaries. The two important ones are semiinfinite cylinders $\Sigma_\pm$ of radius $\rho>a$ the axes of which coincide with the straight parts of $\Gamma$, the third is a bounded region $\Sigma_\mathrm{c}$ complementing the two cylinders to a tubular neighborhood of $\Gamma$ containing $\Omega$, and the last one is the complement to the first three, $\Sigma_\mathrm{r}=\R^3 \setminus (\Sigma_\mathrm{c} \cup \Sigma_+ \cup\Sigma_-)$. The corresponding operator estimates $H_{\Gamma,V}$ from below and since it is a direct sum, $H^\mathrm{N}_{\Sigma_\mathrm{r}} \oplus H^\mathrm{N}_{\Sigma_\mathrm{c}} \oplus H^\mathrm{N}_{\Sigma_+} \oplus H^\mathrm{N}_{\Sigma_-}$, we may consider its parts separately. The first one is irrelevant because the operator is positive, the second one refers to a compact region and therefore it does not contribute to the essential spectrum. In the cylindrical regions the operators have separated variables being of the form $-(\partial^2_s)_\mathrm{N} \otimes h^\mathrm{N}_{V,\rho}$, where $h^\mathrm{N}_{V,\rho}$ is the restriction of $h_V$ to $L^2(\mathcal{B}_\rho(0))$ with Neumann boundary. By minimax principle we have $\inf\sigma_\mathrm{ess}(H_{\Gamma,V})\ge \epsilon(\rho)$, where the right-hand side is the lowest eigenvalue of this `transverse' operator. Mimicking the argument of \cite{DH93} one can check that $\epsilon(\rho)\to\epsilon_0$ holds as $\rho\to\infty$, in fact exponentially fast, and since in view of assumption~\eqref{assc} we can choose the splitting with $\rho$ arbitrarily large, we arrive at the relation \eqref{infspess}.

If $\Gamma$ is only asymptotically straight according to the second part of assumption \eqref{assb}, we can use Neumann bracketing again, but the use of separated variables requires a modification. We replace the above $\Sigma_\pm$ by curved cylinders written with the help of the curvilinear coordinates as $\Sigma_\pm:= x(I_\pm \times \mathcal{B}_\rho(0))$, where $I_\pm$ are halflines, $I_\pm:=\{\pm s:\, |s|>s_0\}$ for a suitable $s_0$. Regarding $\Sigma_\pm$ as Riemannian manifolds with a boundary, the corresponding metric tensor is
 \begin{equation} \label{tube metric t}
(g_{ij})= \left(\begin{array}{ccc}
\left(1+r\gamma\,\cos(\theta\!-\!\alpha)\right)^2
+r^2(\tau\!-\!\dot\alpha)^2\; &\;0\; & \;r^2(\tau\!-\!\dot\alpha)
\\ 0 & 1 & 0 \\ r^2(\tau\!-\!\dot\alpha) & 0 & r^2 \end{array}
\right),
 \end{equation}
cf.~\cite[Sec.~1.3]{EK15}. In particular, we have $g:=\det (g_{ij}) = r^2\left(1+r\gamma \,\cos(\theta\!-\!\alpha)\right)^2$ and the square root of this quantity is the Jacobian of the map $x$. Now assumption~\eqref{asse} comes to play: under it the metric tensor \eqref{tube metric t} becomes diagonal allowing us to separate variables at least asymptotically. To this aim, we use the unitary operator  $\tilde U:\:L^2(\Sigma_\pm)\to L^2(I_\pm \times \mathcal{B}_\rho(0), g^{1/2}\D s\,\D r\,\D\theta)$ associated with the map $x$, which turns $H_{\Gamma,V}\big|_{\Sigma_\pm}$ to the unitarily equivalent operator
 $$
\tilde H^{(\pm)}_{\Gamma,V}:= -g^{-1/2} \partial_s r^2 g^{-1/2} \partial_s - g^{-1/2} \partial_r
g^{1/2} \partial_r - g^{-1/2} \partial_\theta r^{-2} g^{1/2} \partial_\theta - V(r,\theta)
 $$
associated with the quadratic form
 \begin{equation} \label{tube form H}
\psi\mapsto \|(\tilde H^{(\pm)}_{\Gamma,V})^{1/2}\psi\|^2_g = \Vert r g^{-1/4}\partial_s \psi\|^2
+\|g^{1/4}\partial_r \psi\|^2 +\|g^{1/4} r^{-1}\partial_{\theta} \psi\|^2 - \|V^{1/2}\psi\|^2
 \end{equation}
defined on $H^1(I_\pm \times \mathcal{B}_\rho(0))$; the subscript $g$ denotes the norm in $L^2(I_\pm \times \mathcal{B}_\rho(0), g^{1/2}\D s\,\D r\,\D\theta)$. The first term on the right-hand side of \eqref{tube form H} is bounded from below by $(1+\rho\|\gamma_\pm\|_\infty)^{-1} \|r\partial_s \psi\|^2$, where $\gamma_\pm:= \gamma\big|_{I_\pm}$, the next two have a similar lower bound with the factor $1-\rho\|\gamma_\pm\|_\infty$ in the numerator. By assumption we have $\gamma(s)\to 0$ as $|s|\to\infty$. Combining this fact with assumption~\eqref{assc} we infer that one can choose the radius $\rho$ and the arc length cut-off $s_0$ both large and at the same time in such a way that $1\pm\rho\|\gamma_\pm\|_\infty$ is arbitrarily close to one; in this way we obtain relation \eqref{infspess} again.

Note that we have not used here the full strength of assumption~\eqref{assb}. The operator $\tilde H^{(\pm)}_{\Gamma,V}$ exists provided $\Gamma\in C^3$, and to analyze it through the quadratic form \eqref{tube form H} we need just $\Gamma\in C^2$ and no restrictions on the derivatives of $\gamma$ and $\tau$. The complete assumption makes it possible to pass to another pair of unitarily equivalent operators by removing the weight in the scalar product. This is achieved by using $U:\: L^2(\Sigma_\pm)\to L^2(I_\pm \times \mathcal{B}_\rho(0))$ defined as $U\psi= g^{1/4} \psi\circ x$. Explicitly, we have
 \begin{equation} \label{tube Ham}
H^{(\pm)}_{\Gamma,V} := U H_{\Gamma,V}\big|_{\Sigma_\pm} U^{-1}= -\partial_s h^{-2}\partial_s - h_V \big|_{L^2(\mathcal{B}_\rho(0))} + V_\Gamma(s,r.\theta)
 \end{equation}
where the effective potential is given by
 \begin{equation} \label{tube eff pot}
V_\Gamma(s,r,\theta)= -{\gamma^2\over 4 h^2} +{1\over 2}\,{h_{ss}\over
h^3} - {5\over 4}\,{h_s^2\over h^4}
 \end{equation}
with
 \begin{align*}
& h := g^{1/2}r^{-1} = 1+r\gamma\,\cos(\theta\!-\!\alpha),
\\ & h_s = r\gamma\tau\,\sin(\theta\!-\!\alpha) \,+\,
r\dot\gamma\, \cos(\theta\!-\!\alpha), \\ & h_{ss} =
r(\ddot\gamma-\gamma\tau^2)\,\cos(\theta\!-\!\alpha)
+r(2\dot\gamma\tau + \gamma\dot\tau)\,\sin(\theta\!-\!\alpha).
   \end{align*}
To make the description complete, one would have to specify the boundary conditions satisfied by functions from the domain of $H^{(\pm)}_{\Gamma,V}$ at the boundary of $\Sigma_\pm$ -- note that in contrast to Dirichlet conditions considered in~\cite[Sec.~1.3]{EK15} the Neumann ones are not preserved by the transformation $U$ -- but in the present context they do not matter, since we will work in the rest of the proof with functions the supports of which are separated from $\partial\Sigma_\pm$.

To finish the proof we have to check that there are no spectral gaps above $\epsilon_0$. To this aim we use Weyl's criterion; we pick functions $v\in C_0^\infty(\R)$ and $w\in C_0^\infty(\R^2)$ with the supports in $[-1,1]$ and $\mathcal{B}_1(0))$, respectively, such that their norms are $\|v\|=\|w\|=1$ and $v(s)=w(s)=1$ holds in the vicinity of zero, and put
 \begin{equation} \label{weylseq}
\psi(s,r,\theta) = \nu\sqrt{\mu}\: v(\mu(s-\tilde{s}_0))\,w(\nu r,\theta)\,\phi_0(r,\theta)\,\ee^{iks}
 \end{equation}
for $\mu,\,\nu>0$, a fixed $k\in\R$, and some $\tilde{s}_0>s_0$; the support of this function is contained in $\Sigma_+$ as long as $\nu<\rho$. Both cases of assumption~\eqref{assb} may be considered simultaneously; if $\Gamma$ is straight outside a compact, the used $s,r,\theta$ simply reduce to cylindrical coordinates in the vicinity of the straight parts. Since $\phi_0$ is the eigenfunction of $h_V$ corresponding to the eigenvalue $\epsilon_0$ we get
 \begin{align*} 
 (H^{(+)}_{\Gamma,V}-\epsilon_0-k^2)&\psi(s,r,\theta)= \nu\sqrt{\mu}\, \big[ \big(-\mu^2 v''(\mu(s-\tilde{s}_0)) - 2ik\mu v'(\mu(s-\tilde{s}_0))\big) w(\nu r,\theta)\phi_0(r,\theta) \\[.3em]
 & + v(\mu(s-\tilde{s}_0)) \big( -\nu^2 (\Delta w)(\nu r,\theta)\phi_0(r,\theta) - 2\nu (\nabla w)(\nu r,\theta)\cdot(\nabla \phi_0(r,\theta) \big) \big]\,\ee^{iks}
 \end{align*}
We denote the four terms on right-hand side as $f_j,\, j=1,2,3,4$, and estimate their norms. Using a simple change of variables we get
 $$ 
 \|f_1\|^2 = \int_{-1}^1 \D\xi \int_{\mathcal{B}_1(0)} \mu^4 v''(\xi)^2 w(\eta,\theta)^2 \phi_0(\nu\eta,\theta)^2\, \eta\,\D\eta\,\D\theta = \mu^4 \|v''\|^2 \phi_0(\underline{0})^2 \big(1+\OO(\nu) \big),
 $$ 
where $\underline{0}$ stands for the origin of coordinates, and consequently, $\|f_1\|=\OO(\mu^2)$. In a similar way we find the behavior of the other three terms, $\|f_2\|=\OO(\mu)$, $\|f_3\|=\OO(\nu^2)$, and $\|f_4\|=\OO(\nu)$, hence $\| (H^{(+)}_{\Gamma,V}-\epsilon_0-k^2)\psi\| \to 0$ holds as $\mu,\,\nu\to 0$, and the same is true if we replace $H^{(+)}_{\Gamma,V}$ by $H_{\Gamma,V}$ and $\psi$ by $U^{-1}\psi$. To prove that any number larger than $\epsilon_0$ belongs to $\sigma_\mathrm{ess}(H_{\Gamma,V})$, it is thus enough to find a family of regions threaded by the curve, mutually disjoint, and increasing without bounds, both in the radial and longitudinal directions, and supporting functions $\psi$ of the form \eqref{weylseq}. In view of assumptions~\eqref{assb} and \eqref{assc}, this is always possible.
\end{proof}

\section{Discrete spectrum: preliminaries}\label{s:prelim}
\setcounter{equation}{0}

As we have said, our main interest concerns the question whether a nontrivial geometry of the potential support can give rise to a discrete spectrum. Let us first note that in some situations one can get an affirmative answer almost for free. As an example, consider a soft \emph{flat-bottom} waveguide referring to the transverse potential
 \begin{equation} \label{flatbottom}
V \equiv V_\epsilon := \epsilon\chi_M,\quad \epsilon>0,
 \end{equation}
where $\chi_M$ is the indicator function of the set $M$. It is a common knowledge that a rectangular potential well yields Dirichlet boundary condition in the limit of infinite depth; recall that this claim can be given a mathematically rigorous meaning in the sense of norm-resolvent convergence, cf. \cite[Sec.~4.2.3]{DK05} or \cite[Sec.~21]{Si05}. This makes it possible to use a known result about the spectrum of Dirichlet Laplacian in bent tubes.

\begin{proposition} \label{prop:hardwall}
Suppose that $\Gamma$ is not straight and assumptions \eqref{assa}--\eqref{asse} are satisfied, then the discrete spectrum of $H_{\Gamma,V_\epsilon}$ is nonempty for all $\epsilon$ large enough.
\end{proposition}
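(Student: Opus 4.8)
The plan is to deduce the claim from the known spectral behaviour of the Dirichlet Laplacian in a bent tube via the infinite-depth limit of the flat-bottom well \eqref{flatbottom}. Since a constant shift does not change the discrete spectrum, it is equivalent to work with $L_\epsilon := H_{\Gamma,V_\epsilon}+\epsilon = -\Delta + \epsilon\,\chi_{\R^3\setminus\Omega}$ on $L^2(\R^3)$, where $\Omega:=\Omega_{\Gamma,M}$, so that $\epsilon\,\chi_{\R^3\setminus\Omega}$ is a potential barrier of height $\epsilon$ supported outside the tube. By the infinite-barrier result recalled above, see \cite[Sec.~4.2.3]{DK05} or \cite[Sec.~21]{Si05}, the family $L_\epsilon$ converges as $\epsilon\to\infty$, in the norm-resolvent sense, to the Dirichlet Laplacian $-\Delta_D^\Omega$ on $L^2(\Omega)$, the behaviour on $L^2(\R^3\setminus\Omega)$ being immaterial because the corresponding part of the resolvent tends to zero.

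Next I would invoke the geometrically induced binding in Dirichlet tubes. Since $\Gamma$ is asymptotically straight by \eqref{assb}, has unbounded distance between its ends by \eqref{assc}, and the Tang condition \eqref{asse} renders the metric \eqref{tube metric t} diagonal so that the torsion drops out, $\Omega$ belongs to the class of tubes for which the classical results apply: one has $\sigma_\mathrm{ess}(-\Delta_D^\Omega)=[\lambda_1,\infty)$, where $\lambda_1>0$ is the lowest Dirichlet eigenvalue of $-\Delta$ on the cross section $M$, and --- $\Gamma$ being not straight --- a ground state exists, $\Lambda_0:=\inf\sigma(-\Delta_D^\Omega)<\lambda_1$; we refer to \cite{EK15} and the bibliography therein. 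In particular $\Lambda_0$ is an isolated eigenvalue of the limiting operator.

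To conclude, I would put the two pieces together. Proposition~\ref{prop:essential} applies to $H_{\Gamma,V_\epsilon}$ because $V_\epsilon$ is of the form \eqref{assf}, and it gives $\sigma_\mathrm{ess}(H_{\Gamma,V_\epsilon})=[\epsilon_0^{(\epsilon)},\infty)$ with $\epsilon_0^{(\epsilon)}:=\inf\sigma(h_{V_\epsilon})$ the ground-state energy of the comparison operator \eqref{transop} associated with $V_\epsilon$; hence $\inf\sigma_\mathrm{ess}(L_\epsilon)=\epsilon_0^{(\epsilon)}+\epsilon=\inf\sigma(-\Delta+\epsilon\,\chi_{\R^2\setminus M})\to\lambda_1$ as $\epsilon\to\infty$, by the two-dimensional instance of the same infinite-barrier result. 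On the other hand, the norm-resolvent convergence established above produces, for all $\epsilon$ large enough, an eigenvalue $\mu_\epsilon$ of $L_\epsilon$ with $\mu_\epsilon\to\Lambda_0$. As $\Lambda_0<\lambda_1$ whereas $\inf\sigma_\mathrm{ess}(L_\epsilon)\to\lambda_1$, for $\epsilon$ sufficiently large $\mu_\epsilon$ lies strictly below $\inf\sigma_\mathrm{ess}(L_\epsilon)$, so that $\mu_\epsilon\in\sigma_\mathrm{disc}(L_\epsilon)$ and consequently $\mu_\epsilon-\epsilon\in\sigma_\mathrm{disc}(H_{\Gamma,V_\epsilon})$, which is the assertion.

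I expect the one genuinely delicate point to be the bookkeeping that both the candidate eigenvalue and the bottom of the essential spectrum depend on $\epsilon$: one must be sure that the spectral gap $\lambda_1-\Lambda_0>0$ of the limiting Dirichlet problem does not close along the family, which is exactly what the two convergences $\mu_\epsilon\to\Lambda_0$ and $\inf\sigma_\mathrm{ess}(L_\epsilon)\to\lambda_1$ provide. A related matter to be verified --- rather than an estimate to be carried out --- is that the quoted Dirichlet-tube results indeed hold under precisely the hypotheses \eqref{assa}--\eqref{asse}; here the role of the Tang condition \eqref{asse} is decisive, since for a genuinely twisted tube violating it the torsion contributes an effective repulsion that could in principle prevent the binding.
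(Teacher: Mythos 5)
Your argument is correct and follows essentially the same route as the paper: pass to the shifted operator $H_{\Gamma,V_\epsilon}+\epsilon$, use the norm-resolvent convergence of the infinite-barrier limit to the Dirichlet Laplacian in $\Omega$, and invoke the known existence of geometrically induced bound states in bent Dirichlet tubes (Theorem~1.3 of \cite{EK15}). The only cosmetic difference is in the bookkeeping of the $\epsilon$-dependence: the paper exploits the monotonicity of the holomorphic type-(A) family to guarantee that both the eigenvalues and the essential-spectrum threshold converge to their Dirichlet counterparts, whereas you track the threshold via Proposition~\ref{prop:essential} together with the two-dimensional infinite-barrier limit and obtain the eigenvalue from the stability of isolated eigenvalues under norm-resolvent convergence --- both are valid.
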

\begin{proof}
The operator family $\{H_{\Gamma,V_\epsilon}:\,\epsilon\ge 0\}$ is clearly holomorphic of type (A) in the sense of Kato, and it is monotonous with respect to $\epsilon$. The same is true for operators $A_{\Gamma,V_\epsilon}:= H_{\Gamma,V_\epsilon}+\epsilon$ which are positive and form an increasing family, $A_{\Gamma,V_{\epsilon}} \ge A_{\Gamma,V_{\epsilon'}}$ for $\epsilon>\epsilon'$. Consequently, their eigenvalues $\lambda_j(\epsilon)$ are then continuously increasing functions of $\epsilon$, and as such they have limits as $\epsilon\to\infty$. The same applies to the threshold of their essential spectrum. In view of the norm-resolvent convergence mentioned above, the said limits are the respective spectral quantities of the (negative) Dirichlet Laplacian in the tube $\Omega$, the support of the potential $\tilde{V}_\epsilon$ with the profile~\eqref{flatbottom}. Under the hypotheses made, the assumptions of Theorem~1.3 in \cite{EK15} are satisfied, hence the limiting operator has a nonempty discrete spectrum below the continuum which starts at $\nu_1$, the ground state of $-\Delta^\mathrm{D}_M$. Consequently, for all sufficiently large $\epsilon$ we have $\sigma_\mathrm{disc}\big(A_{\Gamma,V_\epsilon}\big) \ne\emptyset$, and the same is true, of course, for the shifted operators $H_{\Gamma,V_\epsilon}$.
\end{proof}

Let us note that the flat-bottom assumption is, in fact, not needed; a similar result can be obtained for potentials if the type $V_\epsilon + W$ with $W\in L^\infty(M)$ by a modification of the reasoning that led to Theorem~1.3 in \cite{EK15}. We are not going to pursue this path, though, because we are looking for an existence result of a more quantitative nature.

The tool we are going to use to achieve this goal is the Birman-Schwinger principle: we rephrase the spectral problem of $H_{\Gamma,V}$ as analysis of the bounded operator
 \begin{equation} \label{BSop}
K_{\Gamma,V}(z) := \tilde{V}^{1/2} (-\Delta-z)^{-1} \tilde{V}^{1/2}
 \end{equation}
in $L^2(\R^3)$, where $\tilde{V}$ is the potential given by \eqref{potential}. Here $z\in\C\setminus\R_+$ is a fixed number from the resolvent set of the Laplacian; we are particularly interested in real values of the spectral parameter putting $z=-\kappa^2$ with $\kappa>0$. In that case the operator \eqref{BSop} is positive; assumption \eqref{asse} allows us to regard it as a map $L^2(\Omega) \to L^2(\Omega)$. The key result is the following:

\begin{proposition} \label{prop:BS}
$z\in\sigma_\mathrm{disc}(H_{\Gamma,V})$ holds if and only if $\,1\in\sigma_\mathrm{disc}(K_{\Gamma,V}(z))$. Moreover, the function $\kappa \mapsto K_{\Gamma,V}(-\kappa^2)$ is continuous, decreasing in $(0,\infty)$, and $\|K_{\Gamma,V}(-\kappa^2)\| \to 0$ holds as $\kappa\to\infty$.
\end{proposition}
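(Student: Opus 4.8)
The plan is to establish the three claims about $K_{\Gamma,V}(z)$ in turn, the first being the heart of the matter and the other two being soft analytic facts. For the Birman-Schwinger equivalence, I would argue as follows. Suppose $z=-\kappa^2\in\sigma_\mathrm{disc}(H_{\Gamma,V})$ with $H_{\Gamma,V}\psi=z\psi$ for a nonzero $\psi\in H^2(\R^3)$. Since $(-\Delta-z)\psi=\tilde V\psi$, writing $\psi=(-\Delta-z)^{-1}\tilde V^{1/2}\cdot\tilde V^{1/2}\psi$ and setting $\phi:=\tilde V^{1/2}\psi$ (which is nonzero because $\psi$ solves a second-order elliptic equation, so it cannot vanish on the open set $\Omega$ where $\tilde V>0$ -- here one uses that $V$ is nontrivial, assumption~\eqref{assf}), one gets $\tilde V^{1/2}\psi=\tilde V^{1/2}(-\Delta-z)^{-1}\tilde V^{1/2}(\tilde V^{1/2}\psi)$, i.e. $K_{\Gamma,V}(z)\phi=\phi$. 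Conversely, if $K_{\Gamma,V}(z)\phi=\phi$ for nonzero $\phi\in L^2(\R^3)$, one sets $\psi:=(-\Delta-z)^{-1}\tilde V^{1/2}\phi$; then $\psi\in H^2(\R^3)$, $\psi\ne 0$ (else $\tilde V^{1/2}\phi=0$, but $\tilde V^{1/2}\phi=\tilde V^{1/2}K_{\Gamma,V}(z)\phi=K_{\Gamma,V}(z)$ applied appropriately forces $\phi=0$; more directly, $\phi=K_{\Gamma,V}(z)\phi=\tilde V^{1/2}\psi=0$), and $(-\Delta-z)\psi=\tilde V^{1/2}\phi=\tilde V^{1/2}K_{\Gamma,V}(z)\phi=\tilde V^{1/2}\tilde V^{1/2}\psi=\tilde V\psi$, so $\psi$ is an eigenfunction of $H_{\Gamma,V}$ at $z$. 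Since $z<0$ lies below $\sigma_\mathrm{ess}(H_{\Gamma,V})=[\epsilon_0,\infty)$ only when $z<\epsilon_0<0$, for such $z$ an eigenvalue is automatically in $\sigma_\mathrm{disc}$; one also needs the standard fact that the multiplicity of $1$ as an eigenvalue of $K_{\Gamma,V}(z)$ matches that of $z$ as an eigenvalue of $H_{\Gamma,V}$, which follows from the bijection $\psi\leftrightarrow\phi$ just described being linear on the respective eigenspaces.

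For the mapping properties, I would note first that $K_{\Gamma,V}(-\kappa^2)$ is bounded and, for $\kappa>0$, nonnegative, because $(-\Delta+\kappa^2)^{-1}$ is a nonnegative bounded operator and $\tilde V^{1/2}$ is a bounded multiplication operator (using $V\in L^\infty$ with precompact support, hence $\tilde V^{1/2}\in L^2\cap L^\infty$). Continuity in $\kappa$ follows from the norm-continuity of $\kappa\mapsto(-\Delta+\kappa^2)^{-1}$ on $(0,\infty)$ (first resolvent identity: $\|(-\Delta+\kappa^2)^{-1}-(-\Delta+\kappa'^2)^{-1}\|\le|\kappa^2-\kappa'^2|\,\kappa^{-2}\kappa'^{-2}$), sandwiched between the fixed bounded factors $\tilde V^{1/2}$. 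Monotonicity: for $0<\kappa'<\kappa$ we have $(-\Delta+\kappa'^2)^{-1}\ge(-\Delta+\kappa^2)^{-1}\ge 0$ in the operator-order sense (spectral calculus, $(\lambda+\kappa'^2)^{-1}\ge(\lambda+\kappa^2)^{-1}$ for $\lambda\ge 0$), and conjugating both sides by $\tilde V^{1/2}$ preserves the order, giving $K_{\Gamma,V}(-\kappa'^2)\ge K_{\Gamma,V}(-\kappa^2)$; one should remark this makes the norm (= largest spectral value, since the operator is nonnegative and, being a conjugation of a compact operator by a bounded one on the precompact support, actually compact) decreasing in $\kappa$.

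Finally, $\|K_{\Gamma,V}(-\kappa^2)\|\to 0$ as $\kappa\to\infty$: estimate the Hilbert-Schmidt norm. The integral kernel of $(-\Delta+\kappa^2)^{-1}$ in $\R^3$ is $G_\kappa(x-y)=\frac{\ee^{-\kappa|x-y|}}{4\pi|x-y|}$, so $K_{\Gamma,V}(-\kappa^2)$ has kernel $\tilde V(x)^{1/2}G_\kappa(x-y)\tilde V(y)^{1/2}$, supported in $\Omega\times\Omega$, and
\begin{equation*}
\|K_{\Gamma,V}(-\kappa^2)\|_\mathrm{HS}^2=\int_\Omega\int_\Omega \tilde V(x)\,\frac{\ee^{-2\kappa|x-y|}}{16\pi^2|x-y|^2}\,\tilde V(y)\,\D x\,\D y\le\frac{\|V\|_\infty^2}{16\pi^2}\int_\Omega\int_\Omega\frac{\ee^{-2\kappa|x-y|}}{|x-y|^2}\,\D x\,\D y.
\end{equation*}
Because $\Omega$ is a tube of bounded cross-section (built on an arc-length parametrized curve with $a\|\gamma\|_\infty<1$, so the Jacobian is comparably bounded), one has $\sup_{x\in\Omega}\int_\Omega|x-y|^{-2}\ee^{-2\kappa|x-y|}\,\D y\le C\int_{\R^3}|u|^{-2}\ee^{-2\kappa|u|}\,\D u=C'/\kappa$, uniformly in $x$, which would be enough if $\Omega$ were bounded; since $\Omega$ is infinite the double integral is not finite, so a plain Hilbert-Schmidt bound is too crude and I would instead use the Schur test: $\|K_{\Gamma,V}(-\kappa^2)\|\le\|V\|_\infty\sup_{x\in\Omega}\int_\Omega G_\kappa(x-y)\,\D y\le\|V\|_\infty\sup_{x\in\R^3}\int_{\R^3}G_\kappa(x-y)\,\D y=\|V\|_\infty\,\kappa^{-2}$, using $\int_{\R^3}G_\kappa=\kappa^{-2}$, which tends to $0$. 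I expect this last point -- realizing the naive Hilbert-Schmidt estimate fails on the unbounded tube and the Schur test is the clean way out -- to be the only genuine subtlety; everything else is bookkeeping with resolvents.
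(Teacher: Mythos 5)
Your proposal is correct and follows essentially the same route as the paper, which simply cites \cite{BGRS97} for the eigenvalue correspondence, gets monotonicity by differentiating the quadratic form $\kappa\mapsto(\psi,\tilde V^{1/2}(-\Delta+\kappa^2)^{-1}\tilde V^{1/2}\psi)$ (equivalent to your operator-ordering argument), and concludes with the same bound $\|K_{\Gamma,V}(-\kappa^2)\|\le\kappa^{-2}\|V\|_\infty$ that you reach by the Schur test (the paper obtains it even more cheaply from $\|(-\Delta+\kappa^2)^{-1}\|=\kappa^{-2}$ and $\|\tilde V^{1/2}\|_\infty^2=\|V\|_\infty$). One parenthetical remark of yours is false and worth deleting: $K_{\Gamma,V}(-\kappa^2)$ is \emph{not} compact -- for the straight tube it is a convolution in the longitudinal variable, hence translation invariant with purely essential spectrum, consistently with your own observation that the Hilbert--Schmidt norm diverges; fortunately nothing is lost, since $\|A\|=\sup\sigma(A)$ holds for any bounded nonnegative self-adjoint operator without compactness. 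Also, in the forward direction of the Birman--Schwinger equivalence the cleanest way to see $\tilde V^{1/2}\psi\ne 0$ is that otherwise $(-\Delta-z)\psi=0$ with $z$ in the resolvent set of $-\Delta$, forcing $\psi=0$; the unique-continuation argument you sketch is unnecessary.
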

\begin{proof}
The first claim is a particular case of a more general and commonly known result, see, e.g., \cite{BGRS97}. The continuity follows from the functional calculus and  we have
 $$ 
\frac{\D}{\D\kappa} (\psi,\tilde{V}^{1/2} (-\Delta+\kappa^2)^{-1}\, \tilde{V}^{1/2}\psi) = -2\kappa (\psi,\tilde{V}^{1/2} (-\Delta+\kappa^2)^{-2}\, \tilde{V}^{1/2}\psi) < 0
 $$ 
for any $\psi\in L^2(\Omega)$ provided $\tilde{V}^{1/2}\psi \ne 0$ which proves the monotonicity. Finally, the simple estimate $\|K_{\Gamma,V}(-\kappa^2)\| \le \kappa^{-2} \|V\|_\infty$ concludes the proof.
\end{proof}

It is also useful to recall that if $g$ is an eigenfunction of the operator \eqref{BSop} with eigenvalue one, the corresponding eigenfunction of the related $H_{\Gamma,V}$ is given by
 \begin{equation} \label{reconst}
 \phi(x) = \int_{\mathrm{supp}\,\tilde{V}} G_\kappa(x,x')\, \tilde{V}(x')^{1/2}g(x')\, \D x',
 \end{equation}
where $G_\kappa$ stands for the integral kernel of $(-\Delta+\kappa^2)^{-1}$. In our case we know the latter explicitly, which allows us to express the action of $K_{\Gamma,V}(z)$. In particular, for $z=-\kappa^2$ with $\kappa>0$ it is an integral operator in $L^2(\Omega)$ with the kernel
 $$ 
K_{\Gamma,V}(x,x';-\kappa^2) = \tilde{V}^{1/2}(x)\,\frac{\ee^{-\kappa|x-x'|}}{4\pi|x-x'|}\, \tilde{V}^{1/2}(x').
 $$ 

Now comes the moment when the part of assumption~\eqref{assf} requiring that the profile potential $V$ has a compact support, becomes important. It makes it possible to use the curvilinear coordinates introduced in the proof of Proposition~\ref{prop:essential} and translate the problem into analysis of an operator in the `straightened' tube. This is accomplished by means of the operator we have encountered already, however, this time we think of the straight tube as equipped with the cylindrical coordinates,
 \begin{equation} \label{straightening}
U: L^2(\Omega) \to L^2(\R\!\times\! M,\, r\,\D s\D r\D\theta), \;\; (U\psi)(s,r,\theta) = (1+r\gamma(s)\cos(\theta\!-\!\alpha(s)))^{1/2} \psi(x(s,r,\theta))\,;
 \end{equation}
we recall that by assumption~\eqref{asse} function $\alpha$ coincides with the derivative of the torsion; using it we pass from the Birman-Schwinger operator $K_{\Gamma,V}(-\kappa^2)$ to the unitarily equi\-valent one, $\RR^\kappa_{\Gamma,V} := U K_{\Gamma,V}(-\kappa^2) U^{-1}$, which is an integral operator on $L^2(\R\times M,\, r\,\D s\D r\D\theta)$ with the kernel
 \begin{equation} \label{uniteq}
\RR^\kappa_{\Gamma,V}(s,r,\theta;s',r',\theta') = W(s,r,\theta)^{1/2}\,\frac{\ee^{-\kappa|x-x'|}}{4\pi|x-x'|}\,W(s',r',\theta')^{1/2},
 \end{equation}
where $x=x(s,r,\theta)$, $x'=x(s',r',\theta')$, and the modified potential is given by
 \begin{equation} \label{modpot}
W(s,r,\theta) := \big(1+r\gamma(s)\cos(\theta-\dot\tau(s)\big)\,V(r,\theta).
 \end{equation}

Before proceeding to our main result, let us look  at the spectrum of the Birman-Schwinger operator in the straight guide situation, $\Gamma=\Gamma_0$. In that case its kernel equals
 \begin{equation} \label{straightker}
\RR^\kappa_{\Gamma_0,V}(s,\underline{x}) = V(s,\underline{x})^{1/2}\,\frac{\ee^{-\kappa|x-x'|}}{4\pi|x-x'|}\,V(s',\underline{x}')^{1/2},
 \end{equation}
where we write $x=(s,\underline{x})$ with the component $\underline{x}$ corresponding to the point with polar coordinates $(r,\theta)$ so that $|x-x'| = \big[ (s-s')^2 + (\underline{x}-\underline{x}')^2\big]^{1/2}$. With respect to the longitudinal variable $s$, the operator acts as a convolution, thus we can use the Fourier-Plancherel operator $F$ on $L^2(\R)$ to pass to a unitarily equivalent operator having the form of a direct integral,
 \begin{equation} \label{dirint}
 (F\otimes I) \RR^\kappa_{\Gamma_0,V} (F\otimes I)^{-1} = \int^\oplus_\R \RR^\kappa_{\Gamma_0,V}(p)\, \D p,
 \end{equation}
Using an integral identity for the Macdonald function, cf.~\cite[3.961.2]{GR07},
 $$ 
 \int_0^\infty \frac{\ee^{-\kappa\sqrt{\xi^2+\rho^2}}}{\sqrt{\xi^2+\rho^2}}\,\cos p\xi\,\D\xi = K_0\big(\rho\sqrt{\kappa^2+p^2}\big),
 $$ 
we find that the fibers in the decomposition \eqref{dirint} are integral operator on $L^2(M)$  with the kernels
\begin{equation} \label{1dBSkernel}
 \RR^\kappa_{\Gamma_0,V}(\underline{x},\underline{x}';p) = \frac{1}{2\pi}\, V(r,\theta)^{1/2}\, K_0\big(\sqrt{\kappa^2+p^2}|\underline{x}-\underline{x}'| \big)\, V(r',\theta')^{1/2}.
\end{equation}
This, however, is nothing else than the Birman-Schwinger kernel of the operator \eqref{transop} for the spectral parameter $z=-(\kappa^2+p^2)$. By assumption, $\epsilon_0$ is the principal eigenvalue of $h_V$, hence the decomposition \eqref{straightsp} in combination with Proposition~\ref{prop:BS} shows that the number $-\kappa^2=\epsilon_0+p^2$ belongs to the spectrum of $H_{\Gamma_0,V}$ for any $p\in\R$, in accordance with the relation \eqref{straightsp} obtained by separation of variables. At the same time, the operator with the kernel \eqref{straightker} satisfies
 $$ 
 \sup \sigma\big(\RR^{\kappa_0}_{\Gamma_0,V}\big) = 1,
 $$ 
where $\kappa_0:= \sqrt{-\epsilon_0}$. To see that assume that the left-hand side is larger than one, then by Proposition~\ref{prop:BS} there would exist a number $\tilde\kappa > \kappa_0$ such that $1\in \sigma(\RR^{\tilde\kappa}_{\Gamma_0,V})$, however, this would mean that $-\tilde\kappa^2 \in \sigma(H_{\Gamma_0,V})$ in contradiction to \eqref{straightsp}.

Finally, we note the relation between the eigenfunction $\phi_0$ of $h_V$ and the eigenfunction $g_0$ of $\RR^\kappa_{\Gamma_0,V}(0)$ corresponding to the unit eigenvalue. On the one hand, we have
 \begin{equation} \label{g_0}
 g_0(\underline{x}) = V^{1/2}(\underline{x})^{1/2}\phi_0(\underline{x}),
 \end{equation}
on the other hand, $\phi_0$ can be expressed in the way analogous to \eqref{reconst}; this allows us to write the generalized eigenfunction associated with the bottom of $\sigma(H_{\Gamma_0,V})$ as
 $$ 
 \psi_0(s,\underline{x}) = \phi_0(\underline{x}) = \frac{1}{2\pi} \int_M K_0\big(\kappa_0|\underline{x}-\underline{x}'|\big)\, V(\underline{x}')^{1/2}\,g_0(\underline{x}')\, \D\underline{x}'.
 $$ 

\section{The main result} 
\setcounter{equation}{0}

Now we are going to derive, with the help of Birman-Schwinger principle, a condition under which the soft quantum waveguide described by the Schr\"odinger operator $H_{\Gamma,V}$ possesses geometrically induced bound states.

\begin{theorem} \label{thm:boundstate}
We adopt assumptions \eqref{assa}--\eqref{assf} and set
 \begin{align*}
& \CC^\kappa_{\Gamma,V}(s,r,\theta;s',r',\theta') \\[-.3em] & \quad := \phi_0(r,\theta) V(r,\theta)\,\Big[\big(1+r\gamma(s)\cos(\theta\!-\!\dot\tau(s)\big)^{1/2}\, \frac{\ee^{-\kappa|x(s,r,\theta)-x(s',r',\theta')|}}{4\pi|x(s,r,\theta)-x(s',r',\theta')|}
\\[-.3em] & \qquad \times \big(1+r'\gamma(s')\cos(\theta'\!-\!\dot\tau(s')\big)^{1/2} \\[.3em] &
\quad -\, \frac{\ee^{-\kappa|x_0(s,r,\theta)-x_0(s',r',\theta')|}}{4\pi|x_0(s,r,\theta)-x_0(s',r',\theta')|}
\,\Big]\, V(r',\theta')\phi_0(r',\theta')
 \end{align*}
for all $(s,r,\theta),\,(s',r',\theta') \in \R\times M$, where $x,\,x_0$ are points of $\Omega_{\Gamma,M}$ and $\Omega_{\Gamma_0,M}$ with those coordinates, respectively. Then $\sigma_\mathrm{disc}(H_{\Gamma,V}) \ne \emptyset$ provided the following inequality holds for $\kappa = \kappa_0:= \sqrt{-\epsilon_0}$,
 \begin{equation} \label{BSsufficient}
 \int_{\R^2} \D s\,\D s' \int_{M\times M} \CC^{\kappa_0}_{\Gamma,V}(s,r,\theta;s',r',\theta')\, rr' \D r\,\D r'\,\D\theta\, \D\theta'> 0.
 \end{equation}
\end{theorem}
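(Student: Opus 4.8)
The plan is to combine the Birman--Schwinger reformulation of Proposition~\ref{prop:BS} with a variational argument showing that, when \eqref{BSsufficient} holds, the bend pushes the top of the spectrum of the Birman--Schwinger operator above the value one at the critical coupling $\kappa=\kappa_0$.

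\emph{Reduction.} By Proposition~\ref{prop:BS} it suffices to exhibit a $\kappa>\kappa_0$ with $1\in\sigma_\mathrm{disc}(K_{\Gamma,V}(-\kappa^2))$. By that same proposition the function $\mu(\kappa):=\sup\sigma(K_{\Gamma,V}(-\kappa^2))$ is continuous, non-increasing, and $\mu(\kappa)\to0$ as $\kappa\to\infty$; recall from just before the theorem that $\sup\sigma(K_{\Gamma_0,V}(-\kappa_0^2))=1$ in the straight case. For every $\kappa>\kappa_0$ one has $\sup\sigma_\mathrm{ess}(K_{\Gamma,V}(-\kappa^2))<1$, because $-\kappa^2<\epsilon_0=\inf\sigma_\mathrm{ess}(H_{\Gamma,V})$ by Proposition~\ref{prop:essential}; this is part of the standard Birman--Schwinger picture \cite{BGRS97}. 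Hence, once I prove $\mu(\kappa_0)>1$ for the bent $\Gamma$, continuity produces a $\kappa_1>\kappa_0$ with $\mu(\kappa_1)=1$; since $1=\mu(\kappa_1)>\sup\sigma_\mathrm{ess}(K_{\Gamma,V}(-\kappa_1^2))$, the value $1$ is an isolated eigenvalue of finite multiplicity of $K_{\Gamma,V}(-\kappa_1^2)$, and therefore $-\kappa_1^2\in\sigma_\mathrm{disc}(H_{\Gamma,V})$, which is the assertion. Everything thus reduces to proving $\mu(\kappa_0)>1$.

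\emph{Trial functions.} Passing to the unitarily equivalent operator $\RR^{\kappa_0}_{\Gamma,V}$ on $L^2(\R\times M,\,r\,\D s\D r\D\theta)$, I take
$$
g_n(s,r,\theta):=\chi_n(s)\,g_0(r,\theta),\qquad \chi_n(s):=n^{-1/2}\chi(s/n),
$$
where $g_0=V^{1/2}\phi_0$ is, by \eqref{g_0}, the (simple) unit eigenvector of the fibre $\RR^{\kappa_0}_{\Gamma_0,V}(0)$ and $\chi\in C_0^\infty(\R)$ is fixed with $\|\chi\|=1$ and $\chi(0)\ne0$; note that $\|g_n\|^2=\|g_0\|^2_{L^2(M)}$ for all $n$. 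Splitting $\RR^{\kappa_0}_{\Gamma,V}=\RR^{\kappa_0}_{\Gamma_0,V}+\big(\RR^{\kappa_0}_{\Gamma,V}-\RR^{\kappa_0}_{\Gamma_0,V}\big)$ I write $\big(g_n,(\RR^{\kappa_0}_{\Gamma,V}-I)g_n\big)=A_n+B_n$ with the obvious meaning. For $A_n$ the direct-integral decomposition \eqref{dirint} gives
$$
A_n=\int_\R|\hat\chi_n(p)|^2\big[(g_0,\RR^{\kappa_0}_{\Gamma_0,V}(p)g_0)-\|g_0\|^2\big]\,\D p,
$$
and since the fibre \eqref{1dBSkernel} depends on $p$ only through the smooth even function $\sqrt{\kappa_0^2+p^2}$, while $g_0$ is exactly the threshold eigenvector, the bracket equals $-\beta_0 p^2+\OO(p^4)$ with $\beta_0=\|\phi_0\|^2_{L^2(\R^2)}\in(0,\infty)$ (here $(-\Delta+\kappa_0^2)\phi_0=V\phi_0$); hence $A_n=-\beta_0\|\chi'\|^2\,n^{-2}+\OO(n^{-4})=o(n^{-1})$. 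This quadratic --- rather than merely linear --- smallness of $A_n$, a consequence of $g_0$ being the genuine threshold mode, is the point that makes the bend term decisive.

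\emph{The bend term and the obstacle.} Comparing the definition of $\CC^{\kappa_0}_{\Gamma,V}$ with the formulas \eqref{uniteq}, \eqref{modpot} and \eqref{straightker} and using $g_0=V^{1/2}\phi_0$, one sees that
$$
\CC^{\kappa_0}_{\Gamma,V}(s,r,\theta;s',r',\theta')=g_0(r,\theta)\,\big(\RR^{\kappa_0}_{\Gamma,V}-\RR^{\kappa_0}_{\Gamma_0,V}\big)(s,r,\theta;s',r',\theta')\,g_0(r',\theta'),
$$
so that $B_n=n^{-1}\int_{\R^2}\chi(s/n)\chi(s'/n)\,F(s,s')\,\D s\,\D s'$ with $F(s,s'):=\int_{M\times M}\CC^{\kappa_0}_{\Gamma,V}(s,r,\theta;s',r',\theta')\,rr'\,\D r\D r'\D\theta\D\theta'$. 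Here the geometry enters: $F$ vanishes whenever $s,s'$ lie on one and the same straight segment of $\Gamma$ (there $W=V$ and $|x-x'|=|x_0-x_0'|$), and it decays as $|s-s'|\to\infty$ because both Green's functions are then exponentially small, in view of assumptions~\eqref{assb} and \eqref{assc}; consequently $F\in L^1(\R^2)$ and dominated convergence yields $nB_n\to\chi(0)^2\int_{\R^2}F(s,s')\,\D s\,\D s'=\chi(0)^2\,L$, where $L>0$ is the quantity in \eqref{BSsufficient}. Combining the two estimates,
$$
\big(g_n,(\RR^{\kappa_0}_{\Gamma,V}-I)g_n\big)=n^{-1}\big(\chi(0)^2 L+o(1)\big),
$$
which is positive for all $n$ large enough; hence $\mu(\kappa_0)=\sup\sigma(\RR^{\kappa_0}_{\Gamma,V})\ge(g_n,\RR^{\kappa_0}_{\Gamma,V}g_n)/\|g_n\|^2>1$ for such $n$, and the reduction step completes the proof. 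I expect the principal difficulty to be the integrability of $F$ --- equivalently, that the cancellation built into $\CC^{\kappa_0}_{\Gamma,V}$ and into \eqref{BSsufficient} is robust enough for the limit to be moved inside the double $s$-integral --- which, in the asymptotically straight alternative of assumption~\eqref{assb}, requires \eqref{assc} to be used with some care; the remaining steps are routine.
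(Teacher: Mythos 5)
Your proposal is correct and follows essentially the same route as the paper: a Birman--Schwinger reduction to showing $\sup\sigma(\RR^{\kappa_0}_{\Gamma,V})>1$, a product trial function built from the transverse threshold eigenvector $g_0$ and a spreading longitudinal cutoff, and a split into a straight-guide part that is negligible (the paper's Lemma~\ref{l:straight}, which gives exactly your quadratic smallness after normalization) plus a bend part that rescales to the integral in \eqref{BSsufficient}. The differences (normalized cutoff, the explicit value of $\beta_0$, the more detailed treatment of the reduction and of the integrability of $F$, which the paper also leaves implicit in its appeal to dominated convergence) are cosmetic.
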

\begin{proof}
As in the two-dimensional case \cite{Ex20} we regard the bent shape of $\Omega$ as a perturbation of the straight guide; the transformation \eqref{straightening} allows us to do  that by comparing operators acting in the same Hilbert space. Proposition~\ref{prop:essential} shows that the perturbations preserves the essential spectrum, hence by Proposition~\ref{prop:BS} the operator $H_{\Gamma,V}$ has at least one eigenvalue below $\epsilon_0$ if and only if the Birman-Schwinger operator $\RR^{\kappa}_{\Gamma,V}$ has eigenvalue one for some $\kappa>\kappa_0$, and in view of the monotonicity of $\kappa\mapsto \RR^{\kappa}_{\Gamma,V}$, this is equivalent to $\sup\sigma(\RR^{\kappa_0}_{\Gamma,V})>1$. Thus it is sufficient to find a function $\psi_\eta\in L^2(\Omega^a_0)$ such that
 \begin{equation} \label{variation}
 (\psi, \RR^{\kappa_0}_{\Gamma,V}\psi) - \|\psi\|^2 > 0.
 \end{equation}
To find a suitable trial function we combine the generalized eigenfunction, associated with the edge of the continuum, with a mollifier which makes it an element of the Hilbert space. The latter makes a positive contribution to the energy; to assess it we consider first the straight case, $\Gamma=\Gamma_0$.
\begin{lemma} \label{l:straight}
Let $\psi_\eta\in L^2(\R\times M)$ be of the form $\psi_\eta(s,r,\theta) = h_\eta(s) g_0(r,\theta)$, where $h_\eta(s):=h(\eta s)$ with a function $h\in C_0^\infty(\R)$ such that $h(s)=1$ holds in the vicinity of $s=0$. Then
 $$ 
 (\psi_\eta,  \RR^{\kappa_0}_{\Gamma_0,V}\psi_\eta) - \|\psi_\eta\|^2 = \OO(\eta) \quad\text{as}\quad \eta\to 0+.
 $$ 
\end{lemma}
\begin{proof}
Since $g_0$ is by \eqref{dirint} and \eqref{1dBSkernel} an eigenfuction of $\RR^{\kappa_0}_{\Gamma_0,V}(0)$, the second term on the left-hand side can written as $-\|h_\eta\|^2 (g_0, \RR^{\kappa_0}_{\Gamma_0,V}(0) g_0)$. Combining the two indicated relations with \eqref{straightker} we have thus to estimate the expression
 \begin{align*}
 \frac{1}{2\pi} \int_{M\times M} & g_0(r,\theta)V(r,\theta)^{1/2} \bigg[\int_\R |\hat h_\eta(p)|^2\, K_0\big(\sqrt{\kappa_0^2+p^2}|\underline{x}-\underline{x}'| \big)\,\D p
 - \|h_\eta\|^2 K_0\big(\kappa_0|\underline{x}-\underline{x}'| \big) \bigg] \\[.3em] & \times V(r',\theta')^{1/2} g_0(r',\theta')\,rr'\,\D r\D r'\D\theta \D\theta',
 \end{align*}
where $\underline{x},\,\underline{x}'$ are again the points of $M$ with the coordinates $(r,\theta)$ and $(r',\theta')$, respectively. The potential $V$ is bounded by assumption and the same is true for the function \eqref{g_0}, so it is sufficient to check that
 $$ 
 \int_\R |\hat h_\eta(p)|^2\, K_0\big(\sqrt{\kappa_0^2+p^2}|\underline{x}-\underline{x}'| \big)\,\D p
 - \|h_\eta\|^2 K_0\big(\kappa_0|\underline{x}-\underline{x}'| \big) = \OO(\eta)
 $$ 
holds as $\eta\to 0$. The Fourier transform of $h_\eta$ is $\hat h_\eta(p) = \frac1\eta\, \hat h\big( \frac{p}{\eta}\big)$ which, in combination with the mean value theorem, allows us to write the first term on the left-hand side as
 $$ 
 \frac1\eta\, \int_\R |\hat h(\zeta)|^2\, K_0\big(\sqrt{\kappa_0^2+\eta^2\zeta^2}|\underline{x}-\underline{x}'|\big)\,\D\zeta
 = \frac1\eta\,  \big( K_0\big(\kappa_0|\underline{x}-\underline{x}'| \big) + \OO(\eta^2) \big)\|\hat h\|^2,
 $$ 
and using further the relations $\|\hat h\|=\|h\|$ and $\|h_\eta\|^2 = \frac1\eta\,\|h\|^2$, we conclude the proof.
\end{proof}

With this preliminary we can prove Theorem~\ref{thm:boundstate}. Consider the difference of the Birman-Schwinger operators referring to the bent and straight case,
 \begin{equation} \label{difference}
 \DD^\kappa_{\Gamma,V} := \RR^\kappa_{\Gamma,V} - \RR^\kappa_{\Gamma_0,V}
 \end{equation}
which is by \eqref{uniteq} and \eqref{straightker} an integral operator on $L^2(\R\!\times\! M,\, r\,\D s\D r\D\theta)$ with the kernel
 $$ 
 \DD^\kappa_{\Gamma,V}(s,r,\theta;s',r',\theta') = W(s,u)^{1/2}\,\frac{\ee^{-\kappa|x-x'|}}{4\pi|x-x'|}\, W(s',u')^{1/2} - V(u)^{1/2} \,\frac{\ee^{-\kappa|x_0-x_0'|}}{4\pi|x_0-x_0'|}\, V(u')^{1/2}
 $$ 
Using \eqref{variation} and \eqref{difference} in combination with Lemmma~\ref{l:straight} we infer that $\sup \sigma(\RR^{\kappa_0}_{\Gamma_0,V}) > 1$ would hold provided
 $$ 
 \lim_{\eta\to 0} (\psi_\eta, \DD^{\kappa_0}_{\Gamma,V} \psi_\eta) > 0,
 $$ 
and given our choice of $h_\eta$, this happens in view of the dominant convergence theorem if
 $$ 
 \int_{\R^2} \D s\D s' \int_{M\times M} g_0(r,\theta)\, \DD^{\kappa_0}_{\Gamma,V}(s,r,\theta;s',r',\theta')\, g_0(r'\theta')\,rr'\,\D r\D r'\D\theta \D\theta' > 0,
 $$ 
however, in view of \eqref{modpot} and \eqref{g_0}, this is nothing but the inequality \eqref{BSsufficient}.
\end{proof}

The sufficient condition \eqref{BSsufficient} is of a quantitative nature. Its application requires to evaluate the integral in the condition~\eqref{BSsufficient} which may not be an easy task in particular cases. Even without it, however, we can make conclusions about the existence of a discrete spectrum, for instance:

\begin{corollary} \label{prop:smallwidth}
Let $\VV_{\epsilon_0}$ be a family of profile potentials $V$ satisfying assumption \eqref{assf} and such that $\inf\sigma(h_V)=\epsilon_0$. Then to any $\epsilon_0>0$ there exists an $a_0(\epsilon_0)$ such that $\sigma_\mathrm{disc}(H_{\Gamma,V}) \ne \emptyset$ holds for all $V\in\VV_{\epsilon_0}$ with $a:=\sup_{x\in M} |x| <a_0(\epsilon_0)$.
\end{corollary}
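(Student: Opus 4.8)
The plan is to verify the sufficient condition \eqref{BSsufficient} of Theorem~\ref{thm:boundstate} (at $\kappa=\kappa_0$) for every profile $V\in\VV_{\epsilon_0}$ once $a$ is small enough. Since the left-hand side of \eqref{BSsufficient} is quadratic in $\phi_0$, we may normalise the transverse ground state by $\int_M V(r,\theta)\phi_0(r,\theta)\,r\,\D r\,\D\theta=1$; then $\D\mu_V:=V\phi_0\,r\,\D r\,\D\theta$ is a probability measure supported in the disc of radius $b:=\sup\{|\underline x|:\underline x\in\mathrm{supp}\,V\}\le a$. Writing $I(a)$ for the quantity on the left of \eqref{BSsufficient}, we have
$$
I(a)=\int_{\R^2}\D s\,\D s'\int_{M\times M}B_{\kappa_0}(s,\underline x;s',\underline x')\,\D\mu_V(\underline x)\,\D\mu_V(\underline x')\,,
$$
where $B_\kappa$ is the expression in square brackets in the definition of $\CC^\kappa_{\Gamma,V}$, i.e. the difference of the weighted free resolvent kernels of $-\Delta+\kappa^2$ in $\R^3$ taken at $x(s,\underline x),x(s',\underline x')\in\Omega_{\Gamma,M}$ and at $x_0(s,\underline x),x_0(s',\underline x')\in\Omega_{\Gamma_0,M}$. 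I claim that, as $a\to0+$,
$$
I(a)\longrightarrow J:=\int_{\R^2}\left[\frac{\ee^{-\kappa_0|\Gamma(s)-\Gamma(s')|}}{4\pi|\Gamma(s)-\Gamma(s')|}-\frac{\ee^{-\kappa_0|s-s'|}}{4\pi|s-s'|}\right]\D s\,\D s'\,,
$$
and that $J\in(0,\infty)$; granting this, $I(a)>\tfrac12 J>0$ for all $a<a_0(\epsilon_0)$, and Theorem~\ref{thm:boundstate} gives the assertion.

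The positivity of $J$ is the geometric point. The function $t\mapsto(4\pi t)^{-1}\ee^{-\kappa_0 t}$ is positive and strictly decreasing on $(0,\infty)$, and since $\Gamma$ is parametrised by arc length $|\Gamma(s)-\Gamma(s')|\le|s-s'|$, with equality precisely when the arc from $s$ to $s'$ is a straight segment; as $\Gamma$ is not straight this inequality is strict on a nonempty open set of pairs, so the integrand of $J$ is nonnegative and not identically zero. Finiteness is no issue: near the diagonal $|\Gamma(s)-\Gamma(s')|=|s-s'|\bigl(1+\OO(|s-s'|^2)\bigr)$ by $C^2$-smoothness, so the integrand tends to $0$ as $s'\to s$, while for $|s-s'|\to\infty$ assumption~\eqref{assc} together with the exponential decay handles the tails. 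Hence $0<J<\infty$.

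To prove $I(a)\to J$ I would split the $(s,s')$-integration into $|s-s'|\ge\delta$ and $|s-s'|<\delta$. On the first region, for $a$ small the distances $|x(s,\underline x)-x(s',\underline x')|$ and $|x_0(s,\underline x)-x_0(s',\underline x')|$ stay bounded below, so for fixed $(s,s')$ the integrand is continuous near $(\underline x,\underline x')=(\underline 0,\underline 0)$ with value $B_{\kappa_0}(s,\underline 0;s',\underline 0)=(4\pi|\Gamma(s)-\Gamma(s')|)^{-1}\ee^{-\kappa_0|\Gamma(s)-\Gamma(s')|}-(4\pi|s-s'|)^{-1}\ee^{-\kappa_0|s-s'|}$ there, and $|B_{\kappa_0}|$ has an $a$-independent majorant integrable in $(s,s')$ (again by \eqref{assc}); since $\mu_V\otimes\mu_V$ converges weakly to the unit mass at $(\underline 0,\underline 0)$, dominated convergence yields the contribution $\int_{|s-s'|\ge\delta}[\,\cdots\,]\,\D s\,\D s'$, which tends to $J$ as $\delta\to0$. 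The near-diagonal region is the technical core. Here one exploits that $B_{\kappa_0}$ vanishes on the diagonal $\{(s,\underline x)=(s',\underline x')\}$ --- the two singular kernels cancel because $x$ and $x_0$ differ from $\Gamma(s)$, resp. from the axis of the straight tube, by at most $b$ in every direction; quantitatively $\bigl||x-x'|^2-|x_0-x_0'|^2\bigr|=\OO\bigl(|s-s'|^2(|s-s'|^2+b)\bigr)$ and the weight factors equal $1+\OO(b)$. Together with the observations that $B_{\kappa_0}$ vanishes when $s,s'$ lie in a common maximal straight segment of $\Gamma$ (so, with $|s-s'|<\delta$, its near-diagonal support projects onto a bounded set of $s$; under the second alternative of \eqref{assb} it is merely negligible for $s$ or $s'$ large), and that the effective support radius $b\le a$ of $\mu_V$ tends to $0$, this leads to a bound on the near-diagonal contribution of the form $C_\Gamma\bigl(\delta^2+b+b|\log b|+b\log(1/\delta)\bigr)$, which is $\OO(\delta^2)$ after letting $a\to0$. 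Sending then $\delta\to0$ finishes the proof.

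The step I expect to be the main obstacle is this last one --- the near-diagonal estimate and, above all, its uniformity over the whole class $\VV_{\epsilon_0}$, whose members need not be uniformly bounded, so that $\mu_V$ may be strongly concentrated inside the disc of radius $b$: one must check that the singular integrals of $|B_{\kappa_0}|$ against $\mu_V\otimes\mu_V$ stay controlled, which rests on the cancellation of the resolvent-kernel singularities to the required order and on each surviving term carrying a positive power of $b$ (possibly times a logarithm), harmless since $b\le a\to0$. The remaining ingredients --- the reduction via Theorem~\ref{thm:boundstate}, the weak convergence of $\mu_V\otimes\mu_V$, and the positivity $J>0$ --- are routine.
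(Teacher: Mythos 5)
Your strategy is, at its core, the paper's: reduce to Theorem~\ref{thm:boundstate}, identify the small-$a$ behaviour of the left-hand side of \eqref{BSsufficient} with the ``on-curve'' integral, and obtain its positivity from $|\Gamma(s)-\Gamma(s')|\le|s-s'|$ (strict on a nonempty open set since $\Gamma$ is not straight) together with the monotone decay of the kernel --- that is exactly the paper's inequality \eqref{onthecurve}. Where you diverge is in how the passage to small $a$ is organized, and the paper's organization is worth noting because it dissolves precisely the step you flag as the main obstacle. Instead of normalizing $V\phi_0$ to a probability measure $\mu_V$ and proving a convergence statement $I(a)\to J$ --- which forces you into the near-diagonal estimate and the uniformity question over the class $\VV_{\epsilon_0}$ --- the paper applies Fubini first: it integrates out $(s,s')$ to define a kernel $F(r,\theta;r',\theta')$ on $M\times M$, observes that $F$ is continuous with $F(\underline{0},\underline{0})>0$, hence $F>0$ on a neighbourhood $\mathcal{U}\times\mathcal{U}$ of the origin determined by $\Gamma$ and $\epsilon_0$ alone, and then chooses $a_0$ so that $M\times M\subset\mathcal{U}\times\mathcal{U}$. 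Since $V\phi_0\ge0$ is nontrivial, the integral \eqref{reformulated} of a pointwise positive kernel against the positive measure $V\phi_0\otimes V\phi_0$ is positive; no quantitative convergence is needed, no control of how strongly $V\phi_0$ concentrates inside the disc of radius $a$, and therefore no uniformity issue over $\VV_{\epsilon_0}$ arises at all. The price is that one must still justify that $F$ is well defined and continuous (the same diagonal cancellation you describe), but only as a qualitative statement about one fixed function of four variables, not as an estimate uniform over a family of possibly singular measures. Your route can be completed, but the near-diagonal/uniformity analysis you correctly identify as the hard part is avoidable, and your own normalization remark (the sign of \eqref{BSsufficient} is scale-invariant in $V\phi_0$) already shows why only pointwise positivity of $F$ on the support matters.
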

\begin{proof}
It is straightforward to check that under our assumptions the integration in \eqref{BSsufficient} can be performed in any order, hence the expression in question can be rewritten as
 \begin{equation} \label{reformulated}
 \frac{1}{2\pi}\,\int_{M\times M} \phi_0(r,\theta)V(r,\theta)\,F(r,\theta;r',\theta')\, V(r',\theta')\phi_0(r',\theta')\,rr'\,\D r\D r'\D\theta \D\theta',
 \end{equation}
where
 \begin{align*}
F(r,\theta;r',\theta') := & \int_{\R^2} \big[(1+u\gamma(s))^{1/2}\,K_0(\kappa_0|x(s,r,\theta)-x(s',r',\theta')|)\,(1+u'\gamma(s'))^{1/2} \\[.3em] & \qquad -  K_0(\kappa_0|x_0(s,u)- x_0(s',u')|)\,\big]\, \D s\D s'.
 \end{align*}
The function $F(\cdot,\cdot)$ is well defined as long as assumption \eqref{asse} guaranteeing the existence of the curvilinear coordinates is satisfied and it is continuous. Furthermore, we have
 \begin{equation} \label{onthecurve}
F(\underline{0},\underline{0}) := \int_{\R^2} \big[K_0(\kappa_0|\Gamma(s)-\Gamma(s')|) -  K_0(\kappa_0|s-s'|)\big]\, \D s\,\D s' > 0,
 \end{equation}
where $\underline{0}$ stands again for the origin of the coordinates in $M$. Recall that the curve $\Gamma$ is parametrized by its arc length so that $|\Gamma(s)-\Gamma(s')| \le |s-s'|$, and since $\Gamma$ is not straight by assumption, there is an open set on which the inequality is sharp; the inequality \eqref{onthecurve} follows because the function $K_0(\cdot)$ is decreasing in $(0,\infty)$. The continuity then implies the existence of a neighborhood $\mathcal{U} \times \mathcal{U}$ of the point $(\underline{0},\underline{0})$ on which $F(r,\theta;r',\theta')$ is positive, and that in view of \eqref{reformulated} in combination with the positivity of $\phi_0 V$ yields the claim.
\end{proof}

\section{Concluding remarks} 
\setcounter{equation}{0}

The present result leaves various questions open. To begin with, the obtained sufficient condition is substantially weaker than the corresponding results for hard-wall tubes and leaky wires, in particular, it is not clear from it whether bound states do generally exist in shallow potential ditches. Of course, the Birman-Schwinger technique we employed here is not the only tool one could use. A natural alternative would be to seek variational estimates to the original Schr\"odinger operator $H_{\Gamma,V}$. However, this is not easy either, the only example available so far concerns a rather particular case \cite{KKK21} and its extension to more general situations represents a challenge.

Furthermore, in contrast to the two-dimensional situation considered in \cite{Ex20} we have here an important geometric restriction expressed by assumption~\eqref{asse} which allowed us to separate effectively the longitudinal and transverse motion. It is not just a technical obstacle: in analogy with the properties of the Dirichlet Laplacian in twisted tubular regions \cite[Sec.~1.7]{EK15} one expects that a twist of the potential support violating the Tang condition will give rise to a repulsive effective interaction which, in particular, might lead to a spectral stability analogous to \cite{EKK08}.

Another possible extension of the current analysis concerns spectral properties of Schr\"odinger operator with attractive potential of a fixed transverse profile supported in the vicinity of an infinite surface which is curved but asymptotically planar. One expects that the nontrivial geometry can again produce bound states, and in addition, that in distinction to tubes the global geometry of the interaction support may play role. In fact, we know that such a curvature-induced discrete spectrum may be infinite, as is the case in an example in which the generating surface is conical \cite{EKP20}, however, a number of questions remains open here.

\subsection*{Data availability statement}

Data are available in the article.

\subsection*{Conflict of interest}

The author has no conflict of interest.

\subsection*{Acknowledgements}

Thanks go to the anonymous referee for useful comments. The research was supported by the Czech Science Foundation within the project 21-07129S and by the EU project CZ.02.1.01/0.0/0.0/16\textunderscore 019/0000778.

\end{document}